\title[Graphs of Large Girth and Surfaces of Large Systole]{Graphs of Large Girth\\ and Surfaces of Large Systole}
\author{Bram Petri and Alexander Walker}
\address{\newline\noindent 
Bram Petri\newline
Max Planck Institute for Mathematics, Bonn, Germany \newline
\newline
Alexander Walker\newline 
Mathematics Department, Brown University, Providence, USA}
\thanks{First author partially supported by Swiss National Science Foundation grant number PP00P2\_128557.}
\date{\today}
\newtheorem{thm}{Theorem}[section]
\newtheorem{prp}[thm]{Proposition}
\newtheorem{cor}[thm]{Corollary}
\newtheorem{lem}[thm]{Lemma}
\newtheorem*{clm}{Claim}
\theoremstyle{definition}
\newtheorem{dff}{Definition}[section]
\newcommand{\seq}[3]{\left\{#1\right\}_{#2}^{#3}}
\newcommand{\abs}[1]{\left| #1 \right|}
\newcommand{\card}[1]{\left| #1 \right|}
\newcommand{\tr}[1]{\mathrm{tr}\left(#1\right)}
\newcommand{\sys}{\mathrm{sys}}
\newcommand{\N}{\mathbb{N}}
\newcommand{\Z}{\mathbb{Z}}
\newcommand{\SL}{\mathrm{SL}}
\newcommand{\SemiGroup}[1]{\langle #1\rangle^+}
\begin{document}

\begin{abstract}
The systole of a hyperbolic surface is bounded by a logarithmic function of its genus. This bound is sharp, in that there exist sequences of surfaces with genera tending to infinity that attain logarithmically large systoles. These are constructed by taking congruence covers of arithmetic surfaces. 

In this article we provide a new construction for a sequence of surfaces with systoles that grow logarithmically in their genera. We do this by combining a construction for graphs of large girth and a count of the number of $\SL_2(\Z)$ matrices with positive entries and bounded trace.
\end{abstract}

\maketitle

\section{Introduction}

This article is about a classical problem in hyperbolic geometry and its analogue in graph theory. In the language of graph theory, this problem concerns the existence of regular graphs with large \emph{girth} (a graph is called $k$-regular if all of its vertices have degree $k$). Here, the girth of a graph is the length of its shortest cycle. It follows from an easy counting argument that the girth $h(\Gamma)$ of a $k$-regular graph $\Gamma$ is bounded from above by\footnote{Here and throughout, the notation $f(n)\lesssim g(n)$ indicates that $\limsup_{n \to \infty} f(n)/g(n)\leq 1$.}
\begin{equation*}
h(\Gamma) \lesssim 2\log_{k-1}(n),
\end{equation*}
in which $n$ is the number of vertices of $\Gamma$. Surprisingly, it is actually possible to construct sequences of $k$-regular graphs with girth that grows logarithmically in the number of vertices.  The first constructions of such graphs are due to Erd\H{o}s and Sachs \cite{ES} and Sauer \cite{Sau}, which provide graphs of girth roughly $\log_{k-1}(n)$. The best known constructions have asymptotic girth
\begin{equation*}
\frac{4}{3}\log_{k-1}(n),
\end{equation*}
as $n\rightarrow\infty$. Examples of graphs that achieve this growth are the trivalent sextet graphs of \cite{BiHo}, as proved by Weiss in \cite{Wei}, and the Ramanujan graphs of Lubotzky, Philips, and Sarnak \cite{LPS}. It is not known whether the constant $\frac{4}{3}$ is optimal. For a survey on constructions of graphs of large girth, see \cite{Big}.

From the perspective of hyperbolic geometry, this question turns into a search for genus $g$ (either closed or of finite area) hyperbolic surfaces of large \emph{systole}. Here, the systole of a hyperbolic surface is the length of a shortest homotopically non-trivial and non-peripheral\footnote{Recall that non-peripheral means not homotopic to a puncture or boundary component.} curve.  

Borrowing familiar arguments from the graph case, Buser proved that the systole $\sys(S)$ of a closed hyperbolic surface $S$ satisfies
\begin{equation*}
\sys(S) \lesssim 2\log(g),
\end{equation*}
where $g$ is the genus of $S$. A similar bound holds true when $S$ has punctures, however the proof in this case is less straightforward. The best known upper bounds are due to Schmutz-Schaller \cite{SS} and Fanoni and Parlier \cite{FP}. 

As with graphs, there exist sequences of hyperbolic surfaces with systoles that grow logarithmically in the genus. Curiously, the best known constructions in this case also come with a factor $\frac{4}{3}$. That is, there exist sequences of hyperbolic surfaces $\seq{S_k}{k=0}{\infty}$ such that
\begin{equation*}
\sys(S_k) \gtrsim \frac{4}{3}\log(g_k),
\end{equation*}
where $g_k$ is the genus of $S_k$ and $g_k\rightarrow\infty$ as $k\rightarrow\infty$. Buser and Sarnak \cite{BS} were the first to construct such sequences, using congruence covers of specific closed arithmetic surfaces. Katz, Schaps, and Vishne \cite{KSV} generalized their construction to principle congruence covers of any closed arithmetic surface. In \cite{Mak}, Makisumi proved that the constant $\frac{4}{3}$ is actually optimal for congruence covers. For a survey on surfaces of large systole, see \cite{Par}.

Especially for closed surfaces, very few explicit examples of global and local maximizers of the systole as a function on moduli spaces of closed hyperbolic surfaces are known. The global maximizer is known only in genus $2$ \cite{Jen} and examples of local maximizers are known in genus $3,6$ and $10$ \cite{Ham}. For cusped surfaces, we know an infinite sequence of global maximizers: Schmutz-Schaller proved that the principal congruence subgroups of $\mathrm{PSL}_2(\Z)$ are global maximizers in their moduli spaces \cite{Sch}. 

The main goal of this paper is to give a new construction of sequences of hyperbolic (both closed and cusped) surfaces with systoles that grow logarithmically in their genera (see Corollary \ref{cor_mainresult}). The idea is to combine the graph theoretical construction by Erd\H{o}s and Sachs with a count of the number of matrices of small trace in the semigroup of $\SL_2(\Z)$-matrices with non-negative entries (see Proposition \ref{prop_trace_growth}). 

Concretely, we construct cusped surfaces with systole at least:
\begin{equation*}
\log g-\log\log g-C,
\end{equation*}
where $g$ is the genus of the corresponding surface and $C$ is some absolute constant. Furthermore, given natural numbers (traces) $k_1,\ldots,k_r$, each exceeding the trace corresponding to the systole, and natural numbers (multiplicities) $m_1,\ldots,m_r$ that are small enough (see Section \ref{sec_construction}), we can construct these surfaces in such a way that they have at least $m_i$ curves of length
\begin{equation*}
2\cosh^{-1}(k_i/2)
\end{equation*}
for $i=1,\ldots,r$. These surfaces can be compactified, in essence by adding points in the cusps, in order to obtain closed surfaces. A result of Brooks (Lemma \ref{lem_Brooks}) implies that the systole of these compactified surfaces remains close to their cusped counterparts.

As a consequence of their construction, these surfaces come with a triangulation that has a dual graph $\Gamma$ of girth
\begin{equation*}
h(\Gamma) \gtrsim \tfrac{1}{2}\log_{\phi}(n) \approx 1.039 \log n,
\end{equation*}
where $n$ is the number of vertices of $\Gamma$ and $\phi=(1+\sqrt{5})/2$ denotes the golden ratio.

\subsection*{Acknowledgement}
Part of this research was carried out while the first author visited the Mathematics Department of Brown University. He thanks the Mathematics Department and in particular his host Jeff Brock for the hospitality during his stay. He is also grateful to the Swiss National Science Foundation for making this stay possible.

We would like to thank Ursula Hamenst\"adt, Ilya Gekhtman, Hugo Parlier, and Peter Sarnak for useful conversations.

\section{Preliminaries}

In this section we explain how to construct a surface from a cubic ribbon graph and how the geometry of such a surface depends on the combinatorics of the underlying graph. The construction we use is taken from \cite{BM} (see also \cite{Bro}).
\subsection{Surfaces from graphs}
We begin with the definition of a cubic ribbon graph:
\begin{dff} A graph $\Gamma=(V,E)$ is called \emph{cubic} if:
\[\deg(v) = 3 \text{ for all }v\in V.\]
A \emph{cubic ribbon graph} is a pair $(\Gamma,\mathcal{O})$, where $\Gamma$ is a cubic graph and $\mathcal{O}$ is a map that assigns a cyclic order to the triple of edges emanating from each vertex.
\end{dff}

Ribbon graphs are sometimes called \emph{fatgraphs} or \emph{oriented graphs}. It should be noted that these definitions do not distinguish between graphs and multigraphs: graphs in this text are allowed to have loops and multiple edges.

Given a cubic ribbon graph $(\Gamma,\mathcal{O})$ we construct a topological surface $S(\Gamma,\mathcal{O})$ as follows. To every vertex $v$ of $\Gamma$ we assign a triangle, whose three sides correspond to the edges emanating from $v$. Then, for each edge $e$ of $\Gamma$, we glue together the two triangle sides corresponding to $e$. We do this in such a way that the resulting surface is orientable. The orientation we pick is the one corresponding to the cyclic order on the edges emanating from each vertex (eg. via the right-hand rule). In this way, the pair $(\Gamma,\mathcal{O})$ uniquely determines the surface $S(\Gamma,\mathcal{O})$.

We observe that the graph $\Gamma$ naturally embeds into the surface $S(\Gamma,\mathcal{O})$, and we will often think of it as such without mention.  In this setting the combinatorics of the underlying graph can be used to control the topology of the surface; for example, the following result of Beineke and Harary bounds the genus of $S(\Gamma,\mathcal{O})$ using solely combinatorial data:

\begin{prp}\cite{BeHa}\label{prp_graphgen} Let $\Gamma$ be a connected graph with $p$ vertices, $q$ edges and girth $h$ that embeds into a surface $S$ of genus $g$. Then:
$$
1+\frac{1}{2}\left(1-\frac{2}{h}\right)q-\frac{1}{2}p \; \leq \; g
$$
\end{prp}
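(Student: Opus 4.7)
The plan is a short computation from Euler's formula. First I would reduce to the case of a 2-cell (cellular) embedding. The desired inequality is a lower bound on $g$, so it suffices to verify it for the minimum genus $g_0 \leq g$ of any embedding of $\Gamma$; by Youngs' theorem an embedding realizing $g_0$ is automatically cellular, so without loss of generality every face is an open disk.

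With $p$ vertices, $q$ edges, and $f$ faces, Euler's formula reads $p - q + f = 2 - 2g$. I would bound $f$ from above by the standard edge--face double count: each edge lies on exactly two face boundary walks (counted with multiplicity), so the total length of all face boundaries is $2q$. Since $\Gamma$ has girth $h$, each face boundary walk has length at least $h$, yielding $hf \leq 2q$, i.e.\ $f \leq 2q/h$. Substituting back into Euler's formula gives
\[
2 - 2g \;=\; p - q + f \;\leq\; p - q + \frac{2q}{h},
\]
and rearranging for $g$ yields the claimed inequality
\[
g \;\geq\; 1 + \frac{1}{2}\left(1 - \frac{2}{h}\right)q - \frac{1}{2}p.
\]

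The subtle point, and the step where I would spend the most effort, is the claim that every face boundary walk has length at least $h$. Such a walk can in principle backtrack across pendant edges or bridges, and so fail to be a simple cycle. The justification is that in a 2-cell embedding of a graph with at least one cycle (which is forced by finite girth), no face boundary walk is null-homotopic in $\Gamma$; after cancelling all backtracks one obtains a non-trivial reduced closed walk, which necessarily contains a cycle of $\Gamma$ and therefore contributes at least $h$ to the length of the original walk, with any backtracking only adding to the total. Together with the reduction to cellular embeddings, this is the only non-routine ingredient in the proof.
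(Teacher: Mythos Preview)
The paper does not supply a proof of this proposition; it is quoted from Beineke--Harary as a known result. Your argument is the standard one and is correct: reduce to a cellular embedding via Youngs' theorem, apply Euler's formula, and bound the number of faces by the edge--face incidence count $hf\le 2q$.

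Your identification of the only delicate point is exactly right, and your outline for it is sound, though the sentence ``no face boundary walk is null-homotopic in $\Gamma$'' deserves one more line of justification than you give. A clean way to finish it: a face boundary walk uses each dart at most once, so if it were null-homotopic in $\Gamma$ its class in $H_1(\Gamma)$ would vanish, forcing every edge it meets to be traversed once in each direction; hence that face borders itself across every such edge and, by connectivity of the dual graph, is the \emph{only} face. Then $\pi_1(S)=\pi_1(\Gamma)/\langle\!\langle w\rangle\!\rangle=\pi_1(\Gamma)$ would be free of rank $q-p+1=2g$, contradicting that a genus $g\ge 1$ surface group is not free (and $g=0$ forces $\Gamma$ to be a tree). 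This fills the gap you flagged; everything else in your write-up is routine.
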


We can turn $S(\Gamma,\mathcal{O})$ into a geometric surface by defining a metric on the underlying triangle. Here, the metric we choose is the metric of the ideal hyperbolic triangle. This metric extends to all of $S(\Gamma, \mathcal{O})$ so long as the gluing maps between the triangles are isometries. There are an uncountable number of such isometries, and in this article we choose the unique gluing such that the perpendiculars connecting the identified sides to the vertices opposite them meet, as in Figure \ref{pic_gluing} below:

\begin{figure}[H]
\begin{center}
\begin{overpic}[scale=1]{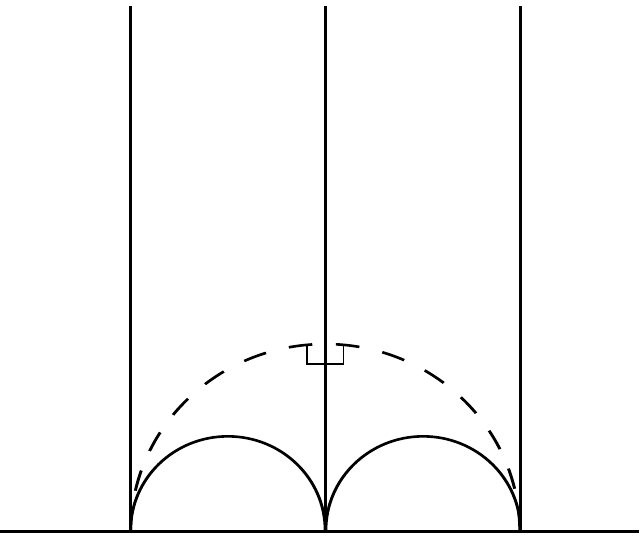}
\put (5,70) {$\mathbb{H}^2$}
\end{overpic}
\end{center}
\caption{A shear $0$ gluing of two triangles.}
\label{pic_gluing}
\end{figure}

In a general gluing of ideal hyperbolic triangles, the signed distance between the two endpoints of the perpendiculars is called the \emph{shear} of the gluing. In other words, we choose each of our gluings to have shear $0$. This implies that $S(\Gamma,\mathcal{O})$ is a punctured surface with a complete hyperbolic structure. The punctures of $S(\Gamma, \mathcal{O})$, called cusps, correspond one-to-one with cycles in $(\Gamma,\mathcal{O})$ that consist exclusively of lefthand turns.

It is also possible to turn $S(\Gamma,\mathcal{O})$ into a closed hyperbolic surface. To see this, we recognize the hyperbolic structure on $S(\Gamma,\mathcal{O})$ as a conformal structure in which the cusps are conformally equivalent to punctured disks. By adding points to these punctured disks, we can extend this conformal structure to a conformal structure of a closed surface. The uniformization theorem gives the existence of a unique complete hyperbolic structure in the equivalence class of this conformal surface (provided that the genus of $S$ is at least $2$, which we assume). Our surface, equipped with this specific hyperbolic structure, will be denoted $\overline{S}(\Gamma,\mathcal{O})$.

\subsection{The geometry of curves}

A particularly nice feature of the construction above is that the geometry of $S(\Gamma,\mathcal{O})$ is entirely determined by the combinatorics of the underlying ribbon graph $(\Gamma,\mathcal{O})$. In what follows we will explain how to understand the geometry of curves on $S(\Gamma,\mathcal{O})$ in terms of $(\Gamma,\mathcal{O})$.

It is a classical fact from hyperbolic geometry (see for example Theorem 1.6.6 in \cite{Bus}) that the homotopy class of a non-peripheral, non-trivial closed curve $\gamma$ contains a unique geodesic $\widetilde{\gamma}$. This geodesic minimizes the length among all curves in that homotopy class. Furthermore, if the surface is given by
\begin{equation*}
S=\mathbb{H}^2/G
\end{equation*}
where $G \subset \mathrm{PSL}_2(\mathbb{R})$ is a torsion-free discrete subgroup, then we can find a hyperbolic element $g\in G$ such that the axis of $g$ in $\mathbb{H}^2$ projects to the $\widetilde{\gamma}$ in $S$. The length of $\tilde{\gamma}$ is equal to the translation length $T_g$ of $g$. This translation length is in turn given by:
\begin{equation*}
T_g = 2\cosh^{-1} \left( \frac{\abs{\tr{g}}}{2}\right),
\end{equation*}
in which $\tr{g}$ denotes the trace of $g \in \mathrm{PSL}_2(\mathbb{R})$.  

The construction of $S(\Gamma,\mathcal{O})$ allows us to recover the translation length for a given curve. Namely, given a curve $\gamma$ on $S(\Gamma,\mathcal{O})$, we can homotope it to a cycle\footnote{Here, a \emph{cycle} is any closed path on a graph. A closed path that meets every vertex and edge in $\Gamma$ at most once will be called a \emph{circuit}.} $\tilde{\gamma}$ on the graph $\Gamma \subset S(\Gamma,\mathcal{O})$. Because $\Gamma$ is oriented we can tell whether $\gamma$ turns `left' or `right' at a given vertex. This means that traversing the curve $\gamma$ once gives us a word $w(\gamma)$ in letters $L$ and $R$. We set
\begin{equation} \label{lr_def}
L=\left(\begin{matrix} 1 & 1 \\ 0 & 1 \end{matrix}\right) \text{ and } R=\left(\begin{matrix}1 & 0 \\ 1 & 1 \end{matrix}\right),
\end{equation}
which turns $w(\gamma)$ into a matrix. The length of the unique geodesic $\tilde{\gamma}$ homotopic to $\gamma$ is now given by
\begin{equation*}
\ell(\tilde{\gamma}) := 2\cosh^{-1} \left( \frac{\tr{w(\gamma)}}{2}\right).
\end{equation*}
Note that the word $w(\gamma)$ is not well-defined: it depends on where we start to traverse $\gamma$ and in which direction we do so. This has no effect on the trace, however, which means that $\ell(\tilde{\gamma})$ \emph{is} well-defined.

When we consider $\overline{S}(\Gamma,\mathcal{O})$ instead, we seem to lose the combinatorial description of the length of curves: the uniformization theorem tells us that there is a natural hyperbolic structure on $\overline{S}(\Gamma,\mathcal{O})$, but it does not tell us anything about the geometry of this structure. However, there is a way to solve this problem, using a result of Brooks \cite{Bro}. We first need the following definition:
\begin{dff} Let $S$ be a hyperbolic surface with at least one cusp and fix $r >0$. Then $S$ is said to have \emph{cusp length} $\geq r$ if for each cusp of $S$ there exists a non-self-intersecting horocycle of length $r$ about that cusp, such that no two horocycles intersect.\end{dff}

We have the following Lemma:
\begin{lem}\label{lem_Brooks} \cite{Bro} Let $(\Gamma,\mathcal{O})$ be a cubic ribbon graph such that $S(\Gamma,\mathcal{O})$ has cusp length $\geq r$, where $r$ is sufficiently large. Then for every non-peripheral and homotopically essential geodesic $\overline{\gamma}$ on $\overline{S}(\Gamma,\mathcal{O})$ there exists a geodesic $\gamma$ on $S(\Gamma,\mathcal{O})$ such that the image of $\gamma$ under the map  $S(\Gamma,\mathcal{O})\rightarrow  \overline{S}(\Gamma,\mathcal{O})$  is homotopic to $\overline{\gamma}$ and
$$
\ell(\overline{\gamma}) \leq \ell(\gamma) \leq (1+\delta(r)) \ell(\overline{\gamma}),
$$
where $\delta(r)\rightarrow 0$ as $r\rightarrow\infty$.
\end{lem}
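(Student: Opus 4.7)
The plan is to compare the hyperbolic metrics $\rho_S$ on $S(\Gamma,\mathcal{O})$ and $\rho_{\overline{S}}$ on $\overline{S}(\Gamma,\mathcal{O})$ through the natural holomorphic inclusion $\iota\colon S(\Gamma,\mathcal{O}) \hookrightarrow \overline{S}(\Gamma,\mathcal{O})$; both are hyperbolic Riemann surfaces and $\iota$ is biholomorphic off a finite set of compactification points. The two inequalities in the lemma are asymmetric: the lower bound is a direct Schwarz--Pick comparison, while the upper bound requires a quantitative estimate that uses the large-cusp hypothesis.

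For the lower bound $\ell(\overline{\gamma}) \leq \ell(\gamma)$, the Schwarz--Pick lemma applied to $\iota$ yields $\iota^*\rho_{\overline{S}} \leq \rho_S$ pointwise on $S$. Since $\overline{\gamma}$ is homotopically essential and non-peripheral in $\overline{S}$, its free homotopy class contains representatives disjoint from the compactification points, hence lying in $S$; such a representative is in turn homotopic in $S$ to a unique geodesic $\gamma$. Then
\[
\ell(\overline{\gamma}) \;\leq\; \ell_{\overline{S}}(\iota(\gamma)) \;\leq\; \ell_S(\gamma) \;=\; \ell(\gamma),
\]
where the first inequality uses that $\overline{\gamma}$ minimizes length in its class on $\overline{S}$ and the second is Schwarz--Pick applied to the curve $\gamma$.

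For the upper bound, the central ingredient is a thick-part comparison of the form $\rho_S \leq (1+\delta(r))\rho_{\overline{S}}$ on the complement in $S$ of the pairwise disjoint horocyclic cusp neighborhoods bounded by horocycles of length $r$, with $\delta(r)\to 0$ as $r\to\infty$. Granted this, I would view $\overline{\gamma}$ as a continuous closed curve on $S$ (it does not hit any compactification point) and then homotope it off the horocyclic cusp neighborhoods by replacing each excursion into such a neighborhood with an arc along the bounding horocycle. Because $\overline{\gamma}$ is non-peripheral and essential in $\overline{S}$, none of these excursions wraps around its cusp, so each replacement is valid up to homotopy; on the bounding horocycle the two metrics are uniformly comparable and the replacement arcs are short, so the extra length is absorbed by $\delta(r)$. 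The resulting curve $\gamma'$ in $S$ is homotopic to $\overline{\gamma}$ and satisfies $\ell_S(\gamma') \leq (1+\delta(r))\ell(\overline{\gamma})$, and the geodesic $\gamma$ in $S$ in the class of $\gamma'$ then obeys $\ell(\gamma) \leq \ell_S(\gamma')$, as desired.

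The main obstacle is establishing the quantitative thick-part comparison with an explicit $\delta(r)\to 0$. Writing $\rho_{\overline{S}} = f\,\rho_S$ on $S$ for a positive conformal factor $f$, Schwarz--Pick already gives $f \leq 1$; what is needed is the reverse bound $f \geq 1 - \delta(r)$ on the thick part. A natural approach is to combine the explicit cusp form of $\rho_S$ in local coordinates around each puncture, namely $|dz|/(|z|\log(1/|z|))$ on a horocyclic disk, with the Ahlfors--Schwarz lemma (or a normal families argument): as the embedded horocycles of length $r$ are pushed outward, the compactification points sit in ever-smaller conformal disks in $\overline{S}$, so $\rho_{\overline{S}}$ converges to $\rho_S$ uniformly on every compact subset of the thick part, yielding the required $\delta(r)$.
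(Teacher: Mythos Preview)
The paper does not prove this lemma: it is stated with the citation \cite{Bro} and no argument is given, so there is no ``paper's own proof'' to compare against. Your sketch is therefore being measured against a result quoted from Brooks rather than against anything in this article.

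That said, your outline follows the standard route and the lower bound via Schwarz--Pick is clean and correct. For the upper bound your strategy is the right one, but the details as written are not quite consistent. You propose to cut off at the horocycles of length $r$ and replace excursions of $\overline{\gamma}$ into those cusp neighborhoods by arcs along the bounding horocycle; however, such arcs can have length up to $r/2$, which is large, not small, so the phrase ``the replacement arcs are short, so the extra length is absorbed by $\delta(r)$'' does not stand on its own. What actually makes the argument work is a two-scale choice: one uses an inner horocycle of some fixed length (say $1$) to define the thick part on which the metric comparison $\rho_S \leq (1+\delta(r))\rho_{\overline{S}}$ is established, while the hypothesis ``cusp length $\geq r$'' guarantees that the compactification points lie in conformally tiny disks, which is what drives $\delta(r)\to 0$. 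With that adjustment the normal-families/Ahlfors--Schwarz argument you indicate does give the required uniform comparison, and the pushing-off step then costs only a bounded amount per excursion, which can be controlled because the geodesic $\overline{\gamma}$ has bounded intersection with the fixed-length horocycles.
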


\section{Words in $L$ and $R$}

In the previous section we have seen that in order to understand the lengths of curves on a surface $S(\Gamma,\mathcal{O})$ we need to understand which words in $L$ and $R$ appear as cycles on $(\Gamma,\mathcal{O})$. In this section we collect some basic properties of the set of words in $L$ and $R$ and use these properties to produce estimates for the number of words in $L$ and $R$ of bounded trace.

\subsection{Basic properties}

We start with some notation. The semigroup of words in $L$ and $R$ (as defined in line \eqref{lr_def}) will be denoted
\begin{equation*}
\SemiGroup{L,R}.
\end{equation*}
Elements of this set will sometimes be interpreted as matrices and sometimes as strings in two letters. It will be clear from the context which of the two we mean. We define an equivalence relation on $\SemiGroup{L,R}$ as follows:
\begin{dff} Let $w,w'\in\SemiGroup{L,R}$. We write $w\sim w'$ if either of the following is true:
\begin{itemize}[leftmargin=0.2in]
\item[--] $w'$ is a cyclic permutation of $w$.
\item[--] $w'$ is a cyclic permutation of $w^*$, where $w^*$ is the word obtained by reading $w$ backwards and interchanging $L$ and $R$.
\end{itemize}
\end{dff}

In the previous section we noted that the map from cycles on $(\Gamma,\mathcal{O})$ to $\SemiGroup{L,R}$ is not well-defined. The equivalence defined here solves this problem, since the compositions of maps into $\SemiGroup{L,R}/\sim$ \emph{is} well-defined. If $\gamma$ is a cycle on $\Gamma$ and its image under the given map is $[w]$ we say that $\gamma$ \emph{carries} $[w]$.

The following lemma provides various estimates on the trace of words in $\SemiGroup{L,R}$:

\begin{lem}\label{lem_basic} Let $w\in \SemiGroup{L,R}$. If $w'\in \SemiGroup{L,R}$ can be obtained from inserting letters from $\{L,R\}$ into $w$ then:
\[\tr{w'} \geq \tr{w}.\]
Moreover, $\tr{w} \leq \phi^{\mathrm{len} w}+1$ in general, and $\tr{w} \geq \mathrm{len}(w)+1$ unless $w \sim L^m$ for some $m$.
\end{lem}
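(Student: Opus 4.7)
My plan is to handle the three assertions separately, since they are logically independent.

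For the monotonicity claim, I would write $L = I + E_{12}$ and $R = I + E_{21}$, where $E_{ij}$ denotes the $2 \times 2$ matrix whose only nonzero entry is a $1$ in position $(i,j)$. If $w = uv$ and $w' = u X v$ differ by the insertion of a single letter $X \in \{L, R\}$, then $w' - w = u(X - I)v$, a product of three matrices with non-negative entries; in particular its diagonal entries are non-negative. Hence $\tr{w'} \geq \tr{w}$, and iterating handles any number of insertions (and any positions, with the degenerate $u = I$ or $v = I$ cases handling insertion at the endpoints).

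For the upper bound $\tr{w} \leq \phi^{\mathrm{len}(w)} + 1$, I would pass through the $\ell^2$ operator norm. A direct computation of the eigenvalues of $L^T L$ gives $\|L\|_2 = \phi$, and the same holds for $R$ by symmetry. Submultiplicativity yields $\|w\|_2 \leq \phi^n$ for $n = \mathrm{len}(w)$. Since $w \in \SL_2(\Z)$ has non-negative entries, AM-GM on its diagonal (using $ad = 1 + bc \geq 1$) gives $\tr{w} \geq 2$, so its eigenvalues are positive reciprocals $\lambda, \lambda^{-1}$ with $\lambda \geq 1$. Combining $\lambda \leq \|w\|_2 \leq \phi^n$ with $\tr{w} = \lambda + \lambda^{-1}$ gives $\tr{w} \leq \phi^n + \phi^{-n} \leq \phi^n + 1$.

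For the lower bound, my plan is combinatorial. Writing $w = \prod_{i=1}^n (I + E^{(i)})$ with $E^{(i)} \in \{E_{12}, E_{21}\}$, I would expand
\[
w = \sum_{S \subseteq [n]} \prod_{i \in S} E^{(i)},
\]
with the product taken in the order induced by $[n]$. The key observation is that $\prod_{i \in S} E^{(i)}$ is nonzero precisely when the letters of $w$ at positions in $S$ strictly alternate between $L$ and $R$, using $E_{12}E_{21} = E_{11}$, $E_{21}E_{12} = E_{22}$, and so on; in that case, the product is a single elementary matrix $E_{jk}$ with trace $1$ when $|S|$ is even and $0$ when $|S|$ is odd. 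Together with the $S = \emptyset$ contribution $\tr(I) = 2$, this gives
\[
\tr{w} = 2 + \#\{\text{non-empty even-length alternating subsequences of } w\}.
\]
If $w \not\sim L^m$, then $w$ contains at least one $L$ and at least one $R$; writing $a, b \geq 1$ for their counts, the length-$2$ alternating subsequences alone number $ab$, and $(a-1)(b-1) \geq 0$ gives $ab \geq a + b - 1 = n - 1$. Thus $\tr{w} \geq 2 + (n-1) = n + 1$.

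The main obstacle is the lower bound: the trick is to recognize the trace as a count of even-length alternating subsequences, after which the elementary inequality $ab \geq a+b-1$ closes the argument in one line. The monotonicity and upper bound are then routine, with the only non-obvious step being the $\ell^2$ norm computation that forces the golden ratio to appear.
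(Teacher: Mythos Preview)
Your argument is correct on all three counts, and each part takes a genuinely different route from the paper.

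For monotonicity, the paper reduces every insertion, via the equivalence $\sim$, to left-multiplication by $L$ and then reads off the trace increment from the identity $\tr{L M} = \tr{M} + c$. Your approach, writing $w' - w = u(X-I)v$ as a product of non-negative matrices, is more direct and avoids any appeal to $\sim$; it also makes it transparent that in fact every entry (not just the trace) is monotone under insertion.

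For the upper bound, the paper asserts without detail that the trace among $n$-letter words is maximised by $(LR)^m$ or $R(LR)^m$ and then checks the inequality on these. Your operator-norm argument replaces that case analysis entirely: the computation $\|L\|_2 = \|R\|_2 = \phi$ together with submultiplicativity and the spectral-radius bound gives $\tr{w} = \lambda + \lambda^{-1} \le \phi^n + \phi^{-n}$ in one stroke. This is cleaner and fully rigorous as written; the paper's version is shorter to state but leaves the ``tedious case by case analysis'' to the reader.

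For the lower bound, the paper picks $\eta \sim w$ ending in $LR$ and prepends the remaining letters one at a time, gaining at least $1$ in trace per step. Your combinatorial expansion identifies $\tr{w} - 2$ exactly as the number of non-empty even-length alternating subsequences of $w$, and then the length-$2$ ones alone give $ab \ge a + b - 1 = n - 1$. This yields more information (an exact formula for the trace) at the cost of a slightly longer setup; the paper's inductive argument is quicker but relies again on the $\sim$-equivalence.
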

\begin{proof} By the identity
\begin{align} \label{left_mult_identity}
\mathrm{tr}\left(\left(\begin{matrix} 1 & 1 \\ 0 & 1 \end{matrix}\right)\left(\begin{matrix} a & b \\ c & d \end{matrix} \right)\right) = \mathrm{tr}\left(\begin{matrix} a & b \\ c & d \end{matrix}\right) +c,
\end{align}
we see that $\tr{Lw} \geq \tr{w}$ with equality if and only if $c=0$, ie. if and only if $w = L^m$ for some $m$. Since any letter insertion is equivalent (modulo $\sim$) to left-multiplication by $L$ and trace is well-defined on $\SemiGroup{L,R}/\sim$, we see that trace is non-decreasing with letter insertion.

To prove the upper bound on $\tr{w}$ we use the fact that among all words of $k$ letters, the trace is maximized by words of the form $(LR)^m$ and $R(LR)^m$ (depending on whether $k$ is even or odd). This in turn follows from an elementary but tedious case by case analysis. The fact that the traces of  $(LR)^m$ and $R(LR)^m$ satisfy the inequality follows by direct computation.

If $w \not\sim L^m$, choose $\eta \sim w$ ending in $LR$. For each letter of $\eta$ prepended to $LR$, we augment our trace by at least $1$. (In the case of multiplication by $L$, this is line \eqref{left_mult_identity}.) It follows that $\tr{\eta} \geq \mathrm{len}(\eta)-2+ \tr{LR} = \mathrm{len}(\eta)+1$.
\end{proof}

In particular, the insertion of letters into a word cannot decrease its trace. From this, we for instance conclude that the systole of $\overline{S}(\Gamma,\mathcal{O})$ is always homotopic to a circuit in $\Gamma$.

\subsection{Counting words by trace}

Let $\SL_2(\Z)^+$ denote the semigroup of integer matrices with determinant $1$ and non-negative coordinates. It's clear that $\SL_2(\Z)^+$ contains $\SemiGroup{L,R}$.

Let $n(m)$ denote the number of elements in $\SemiGroup{L,R}$ of trace $m$. The infinite collections $\SemiGroup{L}$ and $\SemiGroup{R}$ demonstrate that $n(2)$ is not finite. On the other hand, $n(m)$ is finite for $m \geq 3$, as the following Proposition shows:

\begin{prp} \label{upper_bound_prop}
The following inequality holds for all $m \geq 3$:
\begin{align*}
n(m) \leq \sum_{a=1}^{m-1} d\left(a(m-a)-1\right),
\end{align*}
in which $d(k)$ is the number of divisors of $k$.
\end{prp}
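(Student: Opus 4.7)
The plan is to bound $n(m)$ from above not by counting elements of $\SemiGroup{L,R}$ directly, but by counting elements of the larger semigroup $\SL_2(\Z)^+$ of determinant-one integer matrices with non-negative entries. The inclusion $\SemiGroup{L,R} \subseteq \SL_2(\Z)^+$ is immediate, since $L$ and $R$ both lie in $\SL_2(\Z)^+$ and that semigroup is closed under multiplication, so any upper bound for the larger set suffices. This has the advantage of sidestepping any delicate question about which matrices actually arise as words in $L$ and $R$.

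To count matrices in $\SL_2(\Z)^+$ of trace $m$, I would write a generic such matrix with diagonal entries $\alpha,\delta$ and off-diagonal entries $\beta,\gamma$, all in $\Z_{\geq 0}$, subject to $\alpha + \delta = m$ and $\alpha\delta - \beta\gamma = 1$. If $\alpha = 0$, the determinant condition becomes $-\beta\gamma = 1$, which is impossible for non-negative $\beta,\gamma$; the same argument excludes $\delta = 0$. Thus $1 \leq \alpha \leq m-1$ and $\delta = m - \alpha$, from which $\beta\gamma = \alpha(m-\alpha) - 1$. For $m \geq 3$ and $\alpha$ in this range, the right-hand side is a positive integer (its minimum, at $\alpha = 1$, is $m - 2 \geq 1$), so the number of admissible pairs $(\beta,\gamma)$ with $\beta,\gamma \geq 1$ and $\beta\gamma = \alpha(m-\alpha)-1$ is exactly $d(\alpha(m-\alpha) - 1)$. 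Summing over $\alpha$ from $1$ to $m-1$ then yields the desired bound.

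There is no real obstacle here: the argument is a direct parametrization of integer matrices by their top-left entry and the divisor function of the resulting constraint. The only points that require explicit verification are the edge cases $\alpha = 0$ and $\delta = 0$, and the strict positivity of $\alpha(m-\alpha) - 1$ (which ensures the divisor count is well-defined and that both $\beta$ and $\gamma$ are nonzero, so that all $d(\alpha(m-\alpha)-1)$ ordered factorizations are admissible). The use of an inequality in the proposition, rather than an equality, reflects simply that we have not attempted to show that every matrix in $\SL_2(\Z)^+$ of trace $\geq 3$ is actually realized by some word in $L$ and $R$.
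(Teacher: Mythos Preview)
Your proof is correct and follows essentially the same approach as the paper: bound $n(m)$ by the count of trace-$m$ matrices in $\SL_2(\Z)^+$, parametrize by the top-left diagonal entry, and recognize the off-diagonal constraint as a divisor count. If anything, your version is slightly more careful, since you explicitly rule out the boundary cases $\alpha=0$ and $\delta=0$ and verify that $\alpha(m-\alpha)-1>0$, points the paper leaves implicit.
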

\begin{proof} Let $n'(m)$ denote the number of elements of $\SL_2(\Z)^+$ of trace $m$. Then $n(m) \leq n'(m)$, whenever both are finite. To enumerate the elements of $\SL_2(\Z)^+$ of trace $m$, consider a general matrix
\begin{align} \label{matrix form}
\gamma :=\left(\begin{matrix} a & b \\ c & d \end{matrix}\right) \in \SL_2(\Z)^+
\end{align}
of trace $m$. Free choice of $a$ in the interval $[1,m-1]$ determines $d=m-a$ by trace. As well, the determinant relation $ad-bc=1$ gives $ad-1=bc$, hence $b$ divides $a(m-a)-1$ (whereafter choice of $b$ determines $c$ uniquely). It follows that
\[n(m) \leq n'(m) = \sum_{a=1}^{m-1} d(a(m-a)-1),\]
as desired. 
\end{proof}

As it happens, the upper bound in  Proposition \ref{upper_bound_prop} is an equality. This follows from the non-obvious fact that the inclusion $\SemiGroup{L,R} \subset \SL_2(\Z)^+$ is an equality:

\begin{prp} We have $\SemiGroup{L,R} = \SL_2(\Z)^+$.
\end{prp}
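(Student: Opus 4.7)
The inclusion $\SemiGroup{L,R} \subseteq \SL_2(\Z)^+$ is immediate, since $L$ and $R$ both lie in $\SL_2(\Z)^+$ and $\SL_2(\Z)^+$ is closed under matrix multiplication. The content of the proposition is the reverse inclusion, which I would establish by induction on the entry sum $a+b+c+d$ of $M = \bigl(\begin{smallmatrix} a & b \\ c & d \end{smallmatrix}\bigr)\in \SL_2(\Z)^+$, in the spirit of the Euclidean algorithm.

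The basic observation driving the induction is that
\[
L^{-1} M = \begin{pmatrix} a-c & b-d \\ c & d \end{pmatrix}, \qquad R^{-1} M = \begin{pmatrix} a & b \\ c-a & d-b \end{pmatrix}
\]
both have determinant $1$, and whichever of the two has all non-negative entries lies in $\SL_2(\Z)^+$ with strictly smaller entry sum. Peeling off the corresponding generator from the left then closes the induction: an expression $M = L\cdot(L^{-1}M)$ or $M = R\cdot(R^{-1}M)$ realises $M$ as a word in $\SemiGroup{L,R}$.

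The case analysis runs as follows. If $c = 0$, then $ad-bc=1$ forces $a=d=1$ and $M = L^b$; symmetrically $b=0$ gives $M=R^c$, and $b=c=0$ gives $M=I$ (the empty word). Otherwise all four entries are positive integers, and I compare $a$ with $c$. If $a > c$, writing $a = c+r$ with $r \geq 1$ in the determinant relation rearranges to
\[
c(b-d) = rd - 1 \geq d - 1 \geq 0,
\]
so $b \geq d$ and hence $L^{-1}M \in \SL_2(\Z)^+$. The case $a < c$ is handled by the mirror argument, giving $d \geq b$ and $R^{-1}M \in \SL_2(\Z)^+$. The borderline case $a = c$ forces $c = 1$ and $d = b+1$ from the determinant, whereupon a direct computation verifies $M = R L^b$.

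The only genuine obstacle is verifying that the strict inequality $a > c$ forces $b \geq d$ (and its mirror image), which reduces to the short determinantal rearrangement above; once that is in hand, the inductive step is automatic and the remainder of the argument is structural bookkeeping.
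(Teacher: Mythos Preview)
Your proof is correct and follows essentially the same Euclidean-algorithm strategy as the paper: show that for $M\in\SL_2(\Z)^+$ one of $L^{-1}M$ or $R^{-1}M$ again lies in $\SL_2(\Z)^+$, then peel off generators by induction. The only cosmetic differences are that you induct on the entry sum rather than the trace, and you derive the implication $a>c\Rightarrow b\ge d$ directly from the determinant relation, whereas the paper obtains the dichotomy by contradiction; neither change affects the substance of the argument.
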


\begin{proof} Fix $\gamma \in \SL_2(\Z)^+$, defined as in line \eqref{matrix form}. For $\gamma$ of trace $2$, we see that $\gamma$ takes the form $L^k$ or $R^k$ for some integer $k \geq 0$, hence $\gamma \in \SemiGroup{L,R}$. Now, suppose by induction that $\gamma \in \SemiGroup{L,R}$ for all matrices of trace less than $m>2$ and fix $\gamma$ of trace $m$.

Computation shows that $L^{-1} \gamma \in \SL_2(\Z)^+$ provided $a > c$ and $b \geq d$, while $R^{-1} \gamma \in \SL_2(\Z)^+$ under the assumptions $c \geq a$ and $d>b$. For the sake of contradiction, suppose that neither holds. If $a > c$ and $d>b$, then
\[\det(\gamma) = ad-bc \geq (c+1)(b+1)-bc=b+c+1=1,\]
so that $b=c=0$ and $\gamma = I$, which contradicts that $\mathrm{tr}(\gamma) =m$. Alternatively, suppose that $c \geq a$ and $b \geq d$. Then
\[\det(\gamma) = ad-bc \leq cb-bc = 0,\]
another contradiction. Thus $\SL_2(\Z)^+$ contains at least one of $L^{-1} \gamma$ or $R^{-1} \gamma$, hence so does $\SemiGroup{L,R}$ (by induction). In either case, it follows that $\gamma \in \SemiGroup{L,R}$.
\end{proof}

\vspace{3 mm}
To estimate the growth of $n(m)$ we need only estimate the growth of the divisor function. It is well-known that $d(n) =O(n^\varepsilon)$ for all $\varepsilon > 0$, from which we immediately obtain that $n(m) = O(m^{1+\varepsilon})$. 

We denote by $N(m)$ the partial sums of $n(m)$, ie.
\[N(m)=\sum_{k=3}^m n(k).\]
This function satisfies the trivial upper bound $N(m) = O(m^{2+\epsilon})$. With a bit more work, we obtain a more precise upper bound for this function:

\begin{prp} The function $N(m)$ is $O(m^2 \log m)$. \label{prop_trace_growth}
\end{prp}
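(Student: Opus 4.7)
The plan is to bound the double sum from Proposition \ref{upper_bound_prop} by first restructuring it and then applying a hyperbola-style divisor estimate to each inner slice. First I would substitute $b = k-a$ to rewrite
\[
N(m) \leq \sum_{k=3}^{m} \sum_{a=1}^{k-1} d\bigl(a(k-a) - 1\bigr) = \sum_{\substack{a,b \geq 1 \\ a+b \leq m}} d(ab-1),
\]
so that for each fixed $a \in \{1, \dots, m-1\}$ the inner sum becomes $S(a, B) := \sum_{b=1}^{B} d(ab-1)$ with $B = m - a$.

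Next I would bound $S(a,B)$. Since every divisor pair $(u,v)$ of $ab-1$ has $\min(u,v) \leq \sqrt{ab-1} \leq \sqrt{aB}$, one has $d(ab-1) \leq 2 \cdot \#\{u \leq \sqrt{aB} : u \mid ab-1\}$. Swapping the order of summation,
\[
S(a, B) \leq 2 \sum_{u \leq \sqrt{aB}} \#\bigl\{ b \in [1,B] : u \mid ab - 1 \bigr\}.
\]
For each $u$ the congruence $ab \equiv 1 \pmod{u}$ has solutions only when $\gcd(u,a) = 1$, in which case the admissible $b$ lie in a single residue class modulo $u$. So the inner count is at most $\lfloor B/u \rfloor + 1$, and harmonic-series summation yields
\[
S(a,B) \leq 2 \sum_{u \leq \sqrt{aB}} \left( \frac{B}{u} + 1 \right) \leq B \log(aB) + O(B) + O\bigl(\sqrt{aB}\bigr).
\]

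Finally I would sum over $a \in \{1, \dots, m-1\}$. Using $a(m-a) \leq m^2/4$, the main term satisfies
\[
\sum_{a=1}^{m-1} (m-a) \log\bigl(a(m-a)\bigr) \leq 2 \log m \sum_{a=1}^{m-1}(m-a) = O(m^2 \log m),
\]
while $\sum_{a} (m-a) = O(m^2)$ and $\sum_{a} \sqrt{a(m-a)} \leq \sum_{a} (m/2) = O(m^2)$ are absorbed into $O(m^2 \log m)$.

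The main obstacle lies in the inner bound on $S(a,B)$: a direct application of $d(n) = O_{\varepsilon}(n^{\varepsilon})$ would only yield $N(m) = O(m^{2+\varepsilon})$, while counting congruence-class contributions for divisors $u$ up to $aB$ (without the $u \leq \sqrt{aB}$ restriction) produces a spurious $O(aB)$ term that blows up to $O(m^3)$ after the outer summation. The hyperbola trick, exploiting the symmetry between small and large divisors, is what keeps the error from the ``$+1$''s down to $O(\sqrt{aB})$ and preserves the desired $\log m$ factor.
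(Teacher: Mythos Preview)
Your argument is correct and in fact more elementary than the paper's. After the same change of variables, the paper recognises the inner sum as a divisor sum over the arithmetic progression $n \equiv -1 \pmod a$ with $n \leq a(m-a-1)$, and then invokes a Selberg--Hooley asymptotic (which rests on the Weil bound for Kloosterman sums) to estimate it by $\ll X(\log X)/a$ with $X \asymp a(m-a)$. You bypass this entirely: the hyperbola trick reduces the inner sum to counting solutions of $ab \equiv 1 \pmod u$ for $u \leq \sqrt{aB}$, which needs nothing beyond a harmonic sum. The trade-off is that the paper's route quotes a genuine asymptotic rather than an inequality, and the authors use this in a remark after the proof to extract the lower bound $N(m) \gg m^2 \log\log m$; your one-sided estimate does not give this directly. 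One trivial edge case: your rewrite $\sum_{a,b \geq 1,\; a+b \leq m} d(ab-1)$ formally includes $(a,b)=(1,1)$, where $ab-1=0$; the original sum begins at $k=3$ and so never sees this term, so it does no harm, but you may want to note the exclusion.
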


\begin{proof} Interchanging the order of summation, we write
\begin{align} \label{Nm_growth_1}
N(m) = \sum_{k \leq m} \sum_{a=1}^{k-1} d(a(k-a)-1) = \sum_{a=1}^{m-1} \sum_{n\equiv -1 (a)}^{a(m-a-1)} d(n),
\end{align}
so that the inner sum adds the contribution of the divisor function over the arithmetic progression $n \equiv -1 \mod a$. To continue, we require a well-known result due independently to Selberg and Hooley; that the Weil bound for the Kloosterman sum gives a uniform estimate
\[\sum_{\substack{n \leq X \\n\equiv -1(a)}} d(n) = \frac{1}{\varphi(a)} \sum_{\substack{n\leq X \\ (n,a)=1}} d(n) + O\left((a^{\frac{1}{2}}+X^{\frac{1}{3}})X^{\epsilon}\right),\]
as recounted by Fouvry and Iwaniec in \cite{FI}, eg. Opening up the second divisor sum in the line above yields
\begin{align} \label{Nm_growth_2}
\frac{1}{\varphi(a)} \sum_{\substack{m_1\leq X \\ (m_1,a)=1}} \sum_{\substack{ m_2 \leq X/m_1 \\(m_2,a)=1}} \!\!1 =  \sum_{\substack{m_1\leq X \\ (m_1,a)=1}} \frac{X}{a m_1} +O\left(1\right) \ll \frac{X \log X}{a},
\end{align}
in which we've abandoned coprimality to simplify our bound. Returning to line \eqref{Nm_growth_1}, we take $X:=a(m-a-1)-1$, and end with the estimate
\[N(m) \ll \sum_{a < m} \frac{a(m-a)}{a}\log(a(m-a)) = O\left(m^2 \log m\right),\]
as desired.
\end{proof}

\textit{Remark ---} Conversely, restricting the final $m_1$-sum in line \eqref{Nm_growth_2} to the primes less than $X$ gives a lower bound in line in \eqref{Nm_growth_2} of the form $X (\log\log X)/a$. This propagates to show that $N(m) \gg m^2 \log\log m$, ie. that $N(m)$ grows super-quadratically.

\section{Construction of surfaces with large systole} \label{sec_construction}

The results from the previous section allow us to construct hyperbolic surfaces with systole logarithmic in their genus. The construction we present is a Riemann surface version of the construction for graphs of large girth by Erd\H{o}s and Sachs \cite{ES} and Sauer \cite{Sau}, while our presentation is inspired by a version of these proofs given by Bollob\'as \cite{Bol} (see also \cite{Big}). 

In our theorem below we will speak of oriented circuits without reference to the specific oriented trivalent graphs containing them. An oriented circuit in this sense will be a circuit in which it is known whether one turns right or left when traversing a vertex in a given direction. Note that such a circuit naturally corresponds to a word in $L$ and $R$. In this way we are able to define the trace of a cycle or an oriented path. The data of an oriented path in this context includes the data of the direction the path turns in at its initial and final vertices. This means that an oriented path naturally runs between two edges instead of two vertices.

We have the following theorem:

\begin{thm}\label{thm_construction} Let $\varepsilon>0$ and let $H=(V,E)$ be an oriented graph in which every connected component is an oriented circuit such that:
\begin{itemize}[leftmargin=0.3in]
\item[1.] Each circuit in $H$ has trace at least $k$.
\item[2.] $H$ has an even number of vertices at least equal to $2\cdot N(k-2)+4k-4$.
\end{itemize}
Then we can complete $H$ to a trivalent ribbon graph $H'$, respecting the orientation of the circuits, in such a way that $H'$ contains no homotopically essential non-peripheral cycle carrying a word of trace less than $k$. Furthermore, the girth of $H'$ satisfies
$$
h(H') \gtrsim \log_{\phi}(k),
$$
and every cusp in $H'$ has at least $k$ triangles around it.
\end{thm}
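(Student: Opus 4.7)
The plan is to adapt the Erd\H{o}s--Sachs greedy construction, building $H'$ from $H$ by iteratively adding edges between the degree-$2$ vertices of $H$ while maintaining two invariants: (I) no homotopically essential non-peripheral cycle of trace $<k$ is present, and (II) every all-left-turn cycle has length $\ge k$. At step $i$ we have a partial graph $H_i$ with a set $U_i$ of still-degree-$2$ vertices; we pick any $u\in U_i$, locate a valid $v\in U_i\setminus\{u\}$ together with a ribbon placement for a new edge $uv$, and set $H_{i+1}=H_i\cup\{uv\}$. Since $|U_0|=|V|$ is even by hypothesis 2, the parity of $|U_i|$ is preserved and the procedure terminates when $|U_m|=0$.

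The heart of the argument is an estimate on the number of ``bad'' choices at each stage. A candidate $(v,\text{placement})$ is \emph{bad of the first kind} if adding $uv$ creates a non-peripheral cycle of trace $<k$, and \emph{bad of the second kind} if it results in a left-turn cycle of length $<k$. For the first kind, any bad cycle carries a word $w\in\SemiGroup{L,R}$ with $\tr{w}<k$ and $w\not\sim L^m$, which has length at most $k-2$ by the lower bound $\tr{w}\ge\mathrm{len}(w)+1$ of Lemma~\ref{lem_basic}. Given $u$ and one of the two possible starting directions, at most one oriented path in $H_i$ carries a prescribed word, and its terminal vertex is the corresponding forbidden $v$. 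The two letters inserted at $u$ and $v$ when closing the path to a cycle restrict the relevant cycle-words (again via Lemma~\ref{lem_basic}) to those of trace at most $k-2$, giving at most $2N(k-2)$ bad targets of the first kind. A direct inspection of the short left-turn arcs emanating from $u$ and $v$ shows that at most $4k-4$ placements are bad of the second kind.

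Combining the two counts with hypothesis 2 yields at most $2N(k-2)+4k-4\le|V|$ bad targets, so a valid pair $(v,\text{placement})$ always exists (with the last pair of degree-$2$ vertices handled by a mild additional argument in the style of \cite{Bol}). The procedure thus produces a trivalent ribbon graph $H'$ satisfying (I) and (II). Invariant (II) is precisely the statement that every cusp of $S(H',\mathcal{O})$ is encircled by at least $k$ ideal triangles. For the girth bound, any non-peripheral cycle $\gamma$ in $H'$ has $\tr{w(\gamma)}\ge k$ by (I); combined with the upper bound $\tr{w}\le\phi^{\mathrm{len}(w)}+1$ of Lemma~\ref{lem_basic}, this forces $\mathrm{len}(\gamma)\ge\log_\phi(k-1)$, whence $h(H')\gtrsim\log_\phi(k)$.

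The main obstacle is the precise first-kind count: one must unambiguously associate each bad $(v,\text{placement})$ with an element of a set of size $2N(k-2)$ (essentially a word of trace $\le k-2$ together with a starting direction at $u$), which requires careful bookkeeping with the equivalence $\sim$ on $\SemiGroup{L,R}$, the cyclic symmetry of cycle words, and the insertion inequality of Lemma~\ref{lem_basic} to justify that the two closing insertions shift the trace range from ``$<k$'' down to ``$\le k-2$''. The second-kind count and the handling of the final degree-$2$ pair are routine once that injection is in place.
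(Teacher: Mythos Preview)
Your overall architecture is right, and your invariants (I) and (II) match the paper's conditions (a) and (b). The girth and cusp conclusions at the end are also handled correctly via Lemma~\ref{lem_basic}. However, there is a genuine gap in the inductive step.

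The sentence ``Combining the two counts with hypothesis~2 yields at most $2N(k-2)+4k-4\le|V|$ bad targets, so a valid pair $(v,\text{placement})$ always exists'' does not follow: the vertex $v$ you are seeking must have degree~$2$ in $H_i$, and the number of such vertices drops by two at every step. A bound of the form ``(bad targets) $\le |V|$'' says nothing once only a few degree-$2$ vertices remain. Moreover, the obstruction is \emph{not} confined to the last pair: it can happen at any stage that every remaining degree-$2$ vertex lies in the forbidden neighbourhood of every other one. What you call ``a mild additional argument in the style of \cite{Bol}'' is in fact the heart of the proof.

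The paper's remedy in this situation (its Case~2) is an edge swap rather than a simple edge insertion. One fixes two degree-$2$ vertices $x,y$, sets $I=F_k(x)\cap F_k(y)$, and uses inclusion--exclusion together with hypothesis~2 to find $|V\setminus(F_k(x)\cup F_k(y))|\ge |I|+2$; every vertex $w$ in this complement has degree~$3$ and hence a unique non-$E$ edge $ww'$, and pigeonhole then produces some $w'\notin I$. One deletes $ww'$ and adds the two edges $xw'$ and $yw$, so that $|E''|=|E'|+1$. The verification that this swap creates no bad cycle is itself non-trivial: it hinges on the fact that the turns $w'\to x$, $w\to y$ agree with the turns $w'\to w$, $w\to w'$, so that any offending cycle through $xw'$ and $yw$ contains, as a subword, the word carried by a cycle already present in $H'$ (which by (b) has trace $\ge k$ or is an $L^m$ with $m\ge k$); Lemma~\ref{lem_basic} then rules it out. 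Your sketch omits both the swap mechanism and this direction-preserving verification, and without them the induction cannot proceed.
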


\begin{proof} Our proof is constructive. We will construct a set $E'$ of edges on $V$ that contains $E$ such that the graph $H'=(V,E')$ has the desired properties. Note that this construction does not add vertices, hence the orientation at every vertex in $H'$ is given by the initial data and does not change anywhere in the process that we will describe. As such, we will not mention it in the rest of the proof.

We shall consider sets $E'$ of edges on $V$ with the following properties:
\begin{itemize}
\item[(a)] $E\subset E'$.
\item[(b)] Every cycle in $H'=(V,E')$ has trace at least $k$ or is a cycle of lefthand turns with at least $k$ edges.
\end{itemize} 
An example of such a set is the set $E$ itself.

We will prove the following claim, which is sufficient to prove the theorem:
\begin{clm}
If $E'$ satisfies (a) and (b) and the graph $H'=(V,E')$ has a vertex of degree $2$ then there exists a set $E''$ of edges on $V$ that also satisfies (a) and (b) such that $\card{E''}=\card{E'}+1$.
\end{clm}

In order to prove the claim we need to define some specific subsets of $V$, depending on $E'$. Given $x,y\in V$, a \emph{forbidden $k$-path} between a degree $2$ vertex $x$ and a vertex $y$ will be a path in $H'$ of trace less than $k-1$ and with fewer than $k-1$ edges. We emphasize again that the trace of a path depends on the choice of the directions of the turns at $x$ and $y$. At $x$ we we will always choose the direction so that the path comes from the `missing' edge at $x$. This makes sense because of the fact that the orientation at $x$ is predefined. For a degree $2$ vertex $x$ we now define:
\begin{equation*}
F_k(x) = \{y\in V : \text{there exists a forbidden }k\text{-path from }x\text{ to } y \}.
\end{equation*}
This set can be seen as some sort of $k$-ball around the missing edge at $x$.

The crucial observation is the following: if $x$ and $y$ have degree $2$ and $y \in V \smallsetminus F_k(x)$, then we do not introduce any (non-lefthand turn) cycles of trace $\leq k$ by adding an edge between $x$ and $y$.

To see this, note that such a cycle necessarily builds upon a non-forbidden $k$-path from $x$ to $y$, hence has length $\geq k$ or trace $ \geq k$. This last case uses Lemma \ref{lem_basic}, and gives our claim directly. If our cycle has length $\geq k$, then it carries a word $w$ such that either
\pagebreak
\begin{enumerate}[leftmargin=0.3in]
\item[1.] $w \sim L^m$ with $m \geq k$, in which case the cycle corresponds to a cusp.
\item[2.] $w \not\sim L^m$, whereby Lemma \ref{lem_basic} gives the inequality $\tr w \geq \mathrm{len}(w)+1$, which implies that our cycle has trace $\geq k$.
\end{enumerate}

We note as well that since the total number of vertices is even there will always be an even number of degree $2$ vertices. So, given $E'$, there are two cases:
\begin{itemize}[leftmargin=0.6in]
\item[\textbf{Case 1.}] There exist two degree $2$ vertices $x$ and $y$ in $H'$ for which there does not exist a forbidden $k$-path connecting $x$ to $y$.
\item[\textbf{Case 2.}] There exists no pair of such vertices.
\end{itemize}
Our proof breaks into cases along these lines.

Case $1$ is immediate. When we connect $x$ and $y$ by an edge, we obtain a set $E''$ that satisfies our requirements.

For Case $2$ our argument is more involved. If $y \in F_k(x)$, then $x$ and $y$ may be joined by a path of trace at most $k-2$ and with at most $k-2$ edges. There are $N(k-2)$ paths of trace in $[3,k-2]$, and there are at most $2(k-2)+1$ paths of trace $2$ and length $\leq k-2$. (Coming from the trivial path and the $2(k-2)$ paths of the form $L^m$ and $R^m$ with $m \in [1,k-2]$.) Thus
\begin{equation*}
\vert F_k(x) \vert \leq N(k-2) +2k-3.
\end{equation*}

Now take $x,y\in V$ two vertices of degree $2$ in $H'$ and define the sets
\begin{equation*}
U = F_k(x)\cup F_k(y) \quad \text{and}\quad I=F_k(x)\cap F_k(y).
\end{equation*}
Inclusion-exclusion gives
\begin{align*}
\card{U} & = \card{F_k(x)} + \card{F_k(y)} - \card{I} \\
& \leq 2N(k-2)+4k-6-\card{I} \\
& \leq \card{V}-2-\card{I}.
\end{align*}
Then, defining $W := V \setminus U$, we have
\begin{equation*}
\card{W} = \card{V} - \card{U} \geq \card{I} + 2.
\end{equation*}
Under the assumptions of Case $2$, all vertices in $W$ have degree $3$ in $H'$. In particular, each vertex in $W$ is an endpoint to a unique edge in $E'\setminus E$. Thus for every $w\in W$ there exists a unique vertex $w'$ such that $w$ and $w'$ are endpoints of an edge in $E'\backslash E$. Note as well that $w \neq w'$, as equality forces $\deg w \geq 4$, a contradiction. Using this we define the set $W'$:
\begin{equation*}
W' := \{w'\in V : \exists\, w\in W\text{ such that }w'\text{ and }w\text{ share an edge in }E'\backslash E \}.
\end{equation*}
We have
\begin{equation*}
\card{W'}=\card{W} \geq \card{I}+2.
\end{equation*}
Thus there exists some $w'\in W'$ not in $I$. In other words, there is either no forbidden $k$-path from $x$ to $w'$ or no forbidden $k$-path from $y$ to $w'$. Without loss of generality we assume the former. We now define the edge set
\begin{equation*}
E'' := E'\setminus ww' \cup xw' \cup yw,
\end{equation*}
and claim that $E''$ satisfies (a) and (b). 

Condition (a) is immediate. For (b), we proceed by contradiction. Suppose $H''=(V,E'')$ contains a non-trivial, non-peripheral cycle of trace $< k$. This cycle necessarily contains both of the edges $xw'$ and $yw$. There are two options for the order of appearance of the vertices $x$, $y$, $w$, and $w'$ along this cycle:
\begin{align} \label{two_options}
x,w',y,w \;\; \text{ or } \;\; w',x,y,w
\end{align}
(up to the dihedral symmetry of the cycle).

Since we have assumed that there is no forbidden $k$-path between $x$ and $w$ in $H'$, any non-trivial, non-peripheral cycle containing $x$ and $w$ in succession has trace at least $k$. This implies that the first option is impossible (by Lemma \ref{lem_basic}).

For the second option, consider the following diagram:
\begin{figure}[H]
\begin{center}
\begin{overpic}[scale=1]{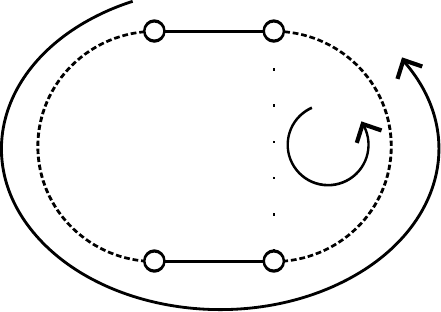}
\put (8,5) {$\gamma_1$}
\put (68,23) {$\gamma_2$}
\put (35,17) {$y$}
\put (54,17) {$w$}
\put (54,68) {$w'$}
\put (35,68) {$x$}
\end{overpic}
\end{center}
\caption{The second option in line \eqref{two_options}.}
\label{pic_twocircs}
\end{figure}
In Figure \ref{pic_twocircs} above, $\gamma_1$ represents the offending cycle of trace $< k$ in $H''$, while the cycle $\gamma_2$ lies in $H'$. Because the turns
\begin{equation*}
w'\rightarrow x \;\;\text{and}\;\; w\rightarrow y
\end{equation*}
are in the same direction as the turns
\begin{equation*}
w'\rightarrow w \;\;\text{and}\;\; w\rightarrow w',
\end{equation*}
respectively, the word on $\gamma_1$ can be obtained by concatenating the word on $\gamma_2$ with some number of letters. 

Since $\gamma_2 \subset H'$, we have two options for the word $w_2$ carried by $\gamma_2$: either $w_2 \sim L^m$ for some $m \geq k$, or $\tr{w_2} \geq k$. Either way, it follows from Lemma \ref{lem_basic} that the word $w_1$ on $\gamma_1$ has $\tr{w_1}>k$ or that $w_1 \sim L^m$ for some $m \geq k$. This contradicts our assumptions on $\gamma_1$, which proves that $E''$ satisfies (b). The chief Claim follows.

Finally, to see that the graph we obtain in the end has girth $\gtrsim \log_{\phi}(k)$  we note that all the circuits in $H'$ are either left hand turn circuits of at least $k$ edges or carry a word of trace at least $k$.  Lemma \ref{lem_basic} tells us that a word with trace at least $k$ has at least  $\sim \log_{\phi}(k)$ letters, which implies the statement.
\end{proof}

As a corollary, we obtain the following:
\begin{cor}\label{cor_mainresult}The construction above gives rise to sequences of cusped hyperbolic surface $\seq{S_{g_k}}{k=0}{\infty}$ and sequences of closed hyperbolic surfaces $\seq{\overline{S}_{g_k}}{k=0}{\infty}$ such that:
\begin{align} \label{systole_asymptotics}
\liminf_{k\rightarrow\infty} \frac{\sys(S_{g_k})}{\log(g_k)}  \geq 1 \quad \text{ and } \quad \liminf_{k\rightarrow\infty} \frac{\sys(\overline{S}_{g_k})}{\log(g_k)}  \geq 1
\end{align}
in which $c_1 k^2 \log\log k\leq g_k \leq c_2 k^2 \log k$ for some absolute constants $c_1,c_2 >0$.
\end{cor}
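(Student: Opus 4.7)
The strategy is to apply Theorem \ref{thm_construction} with a carefully sized input graph $H_k$ for each $k \geq 3$, and then translate its combinatorial output into the asymptotic bounds on genus and systole claimed above. For each $k$, I would take $H_k$ to be a $2$-regular oriented graph whose components are short cycles each carrying a word of trace at least $k$ (for instance $(RL)^m$-cycles with $m \sim \log_\phi k$), chosen so that the total number of vertices $n_k$ is an even integer just above the threshold $2N(k-2) + 4k - 4$. Proposition \ref{prop_trace_growth} together with the remark following it then forces
\[c_1' \, k^2 \log\log k \;\leq\; n_k \;\leq\; c_2' \, k^2 \log k\]
for absolute positive constants $c_1', c_2'$. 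Applying Theorem \ref{thm_construction} produces a trivalent ribbon graph $H'_k$ on $n_k$ vertices in which every non-peripheral essential cycle carries a word of trace at least $k$ and each cusp is surrounded by at least $k$ triangles.

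Setting $S_{g_k} := S(H'_k,\mathcal O)$ and $\overline{S}_{g_k} := \overline{S}(H'_k,\mathcal O)$, I would read off $g_k$ from Euler's formula applied to the ideal triangulation of $\overline{S}_{g_k}$. This triangulation has $c_k$ vertices (one per cusp of $S_{g_k}$), $3 n_k / 2$ edges and $n_k$ triangular faces, so $g_k = 1 + n_k/4 - c_k/2$. Since each cusp uses at least $k$ of the $3 n_k$ triangle corners, $c_k \leq 3n_k/k$, and hence $g_k = n_k/4 + O(n_k/k)$. The claimed two-sided bound on $g_k$ is thus inherited from the bounds on $n_k$.

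For the cusped systole, any essential non-peripheral geodesic on $S_{g_k}$ is freely homotopic to a cycle in $H'_k$ whose word has trace at least $k$, so its length is at least $2\cosh^{-1}(k/2) = 2\log k + o(1)$. Combined with $\log g_k = 2 \log k + O(\log\log k)$, this gives $\liminf \sys(S_{g_k})/\log g_k \geq 1$. For the closed systole I would invoke Lemma \ref{lem_Brooks}: the hypothesis that each cusp is surrounded by at least $k$ triangles implies that the canonical horocycles of $S_{g_k}$ at the cusps are pairwise disjoint and of length proportional to $k$, so the cusp length $r_k$ of $S_{g_k}$ tends to infinity with $k$. Brooks' lemma then yields $\sys(\overline{S}_{g_k}) \geq \sys(S_{g_k})/(1 + \delta(r_k))$ with $\delta(r_k) \to 0$, and the second liminf in \eqref{systole_asymptotics} follows. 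The step I expect to require most care is the verification that the canonical horocycles are in fact disjoint and grow in length with $k$ uniformly, so that Brooks' lemma can be invoked; once this is established, the rest of the argument is routine bookkeeping with Proposition \ref{prop_trace_growth} and Euler's formula.
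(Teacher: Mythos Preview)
Your proposal is correct and follows the paper's overall strategy: apply Theorem~\ref{thm_construction} with $n_k$ vertices chosen just above the threshold $2N(k-2)+4k-4$, use Proposition~\ref{prop_trace_growth} and the remark after it to pin $n_k$ between $k^2\log\log k$ and $k^2\log k$, read off the systole from the trace bound, and invoke Lemma~\ref{lem_Brooks} for the closed case.

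The one substantive difference is in how you obtain the \emph{lower} bound on $g_k$. The paper appeals to the Beineke--Harary embedding inequality (Proposition~\ref{prp_graphgen}) to conclude $g_k \gtrsim V_k/4$ from the girth estimate $h(\Gamma_k)\gg\log_\phi k$. You instead use the conclusion of Theorem~\ref{thm_construction} that every cusp is surrounded by at least $k$ triangles to bound the number of cusps by $c_k \le 3n_k/k$, and then read $g_k = 1 + n_k/4 - c_k/2 \sim n_k/4$ directly off Euler's formula. Your route is more elementary and self-contained: it avoids importing the graph-embedding inequality and uses only data already produced by Theorem~\ref{thm_construction}. The paper's route, on the other hand, does not rely on the cusp-size clause of Theorem~\ref{thm_construction}.

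One point to tidy up: your Euler computation $g_k = 1 + n_k/4 - c_k/2$ tacitly assumes $H'_k$ is connected, which Theorem~\ref{thm_construction} does not guarantee. The paper handles this by bounding the number of components via the girth estimate (each component has at least $h(\Gamma_k)\gg \log_\phi k$ vertices). You can either insert the same argument, or simply note that your cusp-count bound applies component-wise and pass to the sum of genera; either fix is short. You are also right that the paper is terse about verifying the cusp-length hypothesis of Lemma~\ref{lem_Brooks}; your remark that this is where care is needed is well placed, and the ``$\ge k$ triangles per cusp'' clause is indeed what makes it go through.
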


\begin{proof} Using Theorem \ref{thm_construction} above, we can construct a sequence $\seq{(\Gamma_k,\mathcal{O}_k)}{k=1}{\infty}$ of cubic ribbon graphs such that $\Gamma_k$ has $O\left(k^2 \log k\right)$ vertices and that the trace of any essential non-trivial curve on $S(\Gamma_k,\mathcal{O}_k)$ is bounded from below by $k$. If $V_k$ denotes the number of vertices of $\Gamma_k$, this means that
\begin{align*}
\sys\left(S(\Gamma_k,\mathcal{O}_k)\right) & = 2\cosh^{-1}\left(\min\left\{\tfrac{\tr{\gamma}}{2} : \gamma \text{ a cycle on }\Gamma_k\right\} \right) \\
   & \geq 2\cosh^{-1}\left(\frac{k}{2}\right) \geq \log(V_k)-\log \log k- \log B,
\end{align*}
where $B$ comes from the implied constant in the estimate $V_k = O(k^2 \log k)$. It remains to relate the number of vertices of $(\Gamma_k,\mathcal{O}_k)$ to the genus of $S(\Gamma_k,\mathcal{O}_k)$. By construction, our surface $S(\Gamma_k,\mathcal{O}_k)$ comes with a triangulation, which means that we can compute its Euler characteristic. For each $k$, we have
\begin{equation*}
2c - 2\sum\limits_{i=1}^c g_i = v-e+f
\end{equation*}
where $c$ is the number of connected components of $S(\Gamma_k,\mathcal{O}_k)$, $g_i$ the genus of the $i^{th}$ connected component, $v$ the number of vertices in the triangulation, $e$ the number of edges, and $f$ the number of faces. We have
\begin{equation*}
f = V_k \quad \text{and} \quad e = \frac{3}{2} \cdot V_k,
\end{equation*}
and thus
\begin{equation*}
2\sum_{i=1}^c g_i = 2c - v + \frac{1}{2}V_k.
\end{equation*}
Since $h(\Gamma_k) \gg \log_\phi k$, each connected component of $\Gamma_k$ consists of at least $\log_\phi k$ vertices, hence $V_k \gg c \log_\phi k$. It follows that
\[g_i \leq 2\sum_{i=1}^c g_i \ll \frac{V_k}{\log_\phi k} + \frac{1}{2} V_k = O(V_k),\]
and hence
\begin{align} \label{systole_bound_explicit}
\sys\left(S(\Gamma_k,\mathcal{O}_k)\right) \geq \log(g_k) -\log\log k- R,
\end{align}
in which $R$ is a constant independent of $k$ and $g_k$ is the genus of any of the connected components of $S(\Gamma_k,\mathcal{O}_k)$ or the sum of all these genera, depending on the reader's preference. Line \eqref{systole_bound_explicit} proves the first inequality of line \eqref{systole_asymptotics}; for the second inequality, we need only recall Lemma \ref{lem_Brooks}.

To prove that the genera of these surfaces actually grow super-quadratically in $k$, we use that the fact that $\Gamma_k$ embeds into $S(\Gamma_k,\mathcal{O}_k)$. This puts restrictions on the genus $g_k$ of $S(\Gamma_k,\mathcal{O}_k)$. Concretely, Proposition \ref{prp_graphgen} tells us that
\begin{equation*}
1+\left(\frac{1}{4}-\frac{3}{2h(\Gamma_k)}\right)V_k \leq g_k.
\end{equation*}
Since $h(\Gamma_k) \gg \log k$, the lefthand side above tends to $1+V_k/4$ as $k$ grows large, hence $g_k \gg V_k$, which completes the proof.
\end{proof}

Finally, we remark that Theorem \ref{thm_construction} gives us some control over the bottom part of the length spectrum. That is, given $k_0\leq k_1\leq \ldots \leq k_r$ and $m_1,\ldots,m_r \in\N$ such that there exist words $w_1,\ldots,w_r\in \SemiGroup{L,R}$ satisfying
\begin{equation*}
\tr{w_i}=k_i \;\;\text{and}\;\;\sum_{i=1}^r m_i\,\mathrm{len} (w_i) \leq 2\cdot N(k_0-2)+4k_0-4
\end{equation*}
for $i=1,\ldots,r$, we can construct our cusped surface in such a way that it has systole $\geq 2\cosh^{-1}(k_0/2)$, genus $\ll k_0^2\log k_0$, and contains at least $m_i$ curves of length
\begin{equation*}
2\cosh^{-1}\left(\frac{k_i}{2}\right),
\end{equation*}
for $i=1,\ldots,r$. In the closed case we do not get such exact control, but we can choose to construct the surfaces such that they contain $m_i$ curves with lengths in a small interval around the values above. By Lemma \ref{lem_Brooks} these intervals become arbitrarily small as $k_0$ becomes large.

Given words $w_1,\ldots,w_r$, the condition on the multiplicities $m_i$ is easy to verify. However, without explicit examples of words, it is not easy to see whether a set of traces $k_0,k_1,\ldots,k_r$ and multiplicities $m_1,\ldots,m_r$ is `realizable'. A sufficient (but certainly not necessary) condition on the traces and multiplicities is
\begin{equation*}
\sum_{i=1}^r m_i (k_i-1) \leq 2\cdot N(k_0-2)+4k_0-4
\end{equation*}
This follows from the fact that $\tr{L^{k-2}R}=k$. 

\nocite{*}

\end{document}